\documentclass[a4paper,sort&compress]{amsart}
\usepackage{amssymb}
\usepackage{amsthm,amsmath}
\usepackage{enumerate}
\usepackage{hyperref}
\usepackage{mathrsfs}
\usepackage{pgfplots}
\usepackage{array}

\usepackage{algorithm}
\usepackage{booktabs}
\usepackage{blkarray}

\usepackage[sort,compress]{cite}
\usepackage[capitalize]{cleveref}
\Crefname{equation}{}{}

\usepackage{xcolor}
\usepackage{tikz}
\usetikzlibrary{backgrounds,patterns,matrix,calc,arrows}

\title[Generic identifiability of the Chow variety]{Almost all subgeneric third-order Chow decompositions are identifiable}
\date{}

\author{Douglas A. Torrance}
\thanks{}

\author{Nick Vannieuwenhoven}
\thanks{The resources and services used in this work were provided by the VSC (Flemish Supercomputer Center), funded by the Research Foundation---Flanders (FWO) and the Flemish Government.\\[5pt] 
Douglas A. Torrance (\texttt{dtorrance@piedmont.edu}).\\
Piedmont University, Georgia, United States of America.\\[3pt]
Nick Vannieuwenhoven (\texttt{nick.vannieuwenhoven@kuleuven.be}).\\ 
KU Leuven, Department of Computer Science, Leuven, Belgium.\\
N.V.~was supported by a Postdoctoral Fellowship of the Research Foundation---Flanders (FWO) with project 12E8119N}

\newcommand{\NN}{\mathbb N}
\newcommand{\PP}{\mathbb P}

\newcommand{\ZZ}{\mathbb Z}
\newcommand{\CC}{\mathbb C}
\newcommand{\RR}{\mathbb R}
\newcommand{\bbk}{\Bbbk}
\newcommand{\Var}[1]{\mathcal{#1}}
\newcommand{\SFF}{\mathit{I\!I}}
\newcommand{\Tang}[2]{\mathrm{T}_{#1} {#2}}
\newcommand{\Norm}[2]{\mathrm{N}_{#1} {#2}}

\DeclareMathAlphabet{\pzc}{OT1}{pzc}{m}{it}

\DeclareMathOperator{\CV}{CV}

\newtheorem{theorem}{Theorem}[section]
\newtheorem{proposition}[theorem]{Proposition}
\newtheorem{lemma}[theorem]{Lemma}

\theoremstyle{definition}

\newtheorem{remark}[theorem]{Remark}

\numberwithin{equation}{section}

\subjclass[2010]{14C20, 14N05, 14Q15, 14Q20, 15A69, 15A72}
\keywords{Chow variety, split variety, Chow decomposition, identifiability}
\begin{document}

\begin{abstract}
For real and complex homogeneous cubic polyomials in $n+1$ variables, we prove that the Chow variety of products of linear forms  is generically complex identifiable for all ranks up to the generic rank minus two. By integrating fundamental results of [Oeding, Hyperdeterminants of polynomials, Adv. Math., 2012], [Casarotti and Mella, From non defectivity to identifiability, J. Eur. Math. Soc., 2021], and [Torrance and Vannieuwenhoven, All secant varieties of the Chow variety are nondefective for cubics and quaternary forms, Trans. Amer. Math. Soc., 2021] the proof is reduced to only those cases in up to $103$ variables. These remaining cases are proved using the Hessian criterion for tangential weak defectivity from [Chiantini, Ottaviani, and Vannieuwenhoven, An algorithm for generic and low-rank specific identifiability of complex tensors, SIAM J. Matrix Anal. Appl., 2014]. We also establish that the smooth loci of the real and complex Chow varieties are immersed minimal submanifolds in their usual ambient spaces.
\end{abstract}

\maketitle

\section{Introduction}

Let $\bbk$ denote either the reals $\RR$ or complex numbers $\CC$.
A \textit{Chow decomposition} over $\bbk$ expresses a homogeneous polynomial $p \in S^d(\bbk^{n+1})$ of degree $d$ in $n+1$ variables as a minimum-length sum of \textit{completely decomposable forms} (i.e., products of linear forms): 
\begin{align}\tag{CD}\label{eqn_CD}
 p(x_0, \ldots, x_n) 
 = \sum_{j=1}^r \prod_{i=0}^{d-1} L_{i,j} 
 = \sum_{j=1}^r \prod_{i=0}^{d-1} (a_{0,i,j} x_0 + a_{1,i,j} x_1 + \cdots + a_{n,i,j} x_n),
\end{align}
where $L_{i,j} = a_{0,i,j} x_0 + a_{1,i,j} x_1 + \cdots + a_{n,i,j} x_n$ are the $\bbk$-linear forms. The minimal number $r$ is called the \textit{Chow rank} of $p$ over $\bbk$. This decomposition and its algebro-geometric properties were previously studied in \cite{GKZ1994,AB2011,Shin2011,Shin2012,Torrance2013,Abo2014,Torrance2017,CCGO2017,QML2017,TV2020,Rodriguez2021}. Note that \cref{eqn_CD} is a depth-$3$ $\Sigma\Pi\Sigma$ arithmetic circuit, and finding high-rank Chow decompositions of a certain class of polynomials is a central problem in arithmetic complexity theory for separating the algebraic complexity classes $\mathrm{VP}$ and $\mathrm{VNP}$ \cite{BCS1997,SY2010,Landsberg2012}.

Two natural questions about complex Chow decompositions are the following:
\begin{enumerate}
 \item what is the minimal value of $r$ so that almost all polynomials $p \in S^d \CC^{n+1}$ admit a Chow decomposition of rank $r$, and 
 \item how many distinct complex Chow decompositions (up to permutation of the summands) does a generic rank-$r$ polynomial $p$ have?
\end{enumerate}
It suffices to study the complex case because analogous answers for real Chow decompositions can be derived from the answers in the complex case. Recall that a rank $r$ is called \emph{typical} if there is a Euclidean-open subset of polynomials in $S^d(\RR^{n+1})$ with this Chow rank. Blekherman and Teitler \cite{BT2015} showed that the smallest \emph{typical} real rank equals the answer to the first question. As for the second question, the observation of \cite{COV2017b,QCL2016} that generic complex $r$-identifiability also entails generic real $r$-identifiability applies because the set of bounded rank Chow decompositions is the image of the regular map implicitly defined by \cref{eqn_CD}. Consequently, in the remainder of this paper, the focus is on answering the foregoing questions for complex Chow decompositions.

Before trying to answer these questions, recall that the Chow decomposition is a generalization of another famous polynomial (or, equivalently, symmetric tensor) decomposition: by taking $L_{0,j}=\cdots=L_{d-1,j}$ in \cref{eqn_CD} we obtain the \textit{Waring} \cite{IK1999,Landsberg2012} or \textit{symmetric tensor rank} decomposition \cite{CGLM2008}, which was already studied by Clebsch, Sylvester, Palatini, and Terracini in the 19th and first half of the 20th century; see \cite{BO2008} for historical remarks. 
After a century-long journey in projective algebraic geometry starting in earnest with Palatini's 1903 paper \cite{Palatini1903}, the necessary tools, such as those in \cite{Terracini1911,AH1995,CC2001,CC2006,CO2012,CM2019}, were developed to study foregoing questions. For the Waring decomposition, the first question was answered by Alexander and Hirschowitz \cite{AH1995} in 1995, and the second was completely resolved by 2019 through the combined works of Ballico \cite{Ballico2005}, Chiantini, Ottaviani, and Vannieuwenhoven \cite{COV2017}, and Galuppi and Mella \cite{GM2017}.

As for the Chow decomposition, the picture is not yet complete. The Zariski closure of the completely decomposable forms in $n+1$ variables and degree $d$ is
\[
 \Var{C}_{d,n} = \overline{\{ [L_0\cdots L_{d-1}] \mid L_i \in \CC^{n+1} \}} \subset S^d \PP^n,
\]
where $\PP^n$ is the $n$-dimensional projective space over $\CC$.\footnote{For concreteness we present the results over $\CC$, but by the Lefschetz principle it can be substituted by any algebraically closed field of characteristic zero.} 
It is an irreducible, nondegenerate (not contained in a hyperplane), singular, projective variety called either the \textit{Chow variety of zero-cycles} \cite{GKZ1994} or the \textit{split variety} \cite{AB2011,Abo2014}.
The Chow variety is part of a larger family of subvarieties of $S^d\PP^n$ which have become known as \textit{Chow-Veronese varieties}. For any partition $\mathbf d = (d_1,\ldots,d_k) \vdash d$, $\CV_{\mathbf d}$ is the Zariski closure of the set of forms that can be written as $[L_1^{d_1}\cdots L_k^{d_k}]$ for $L_i\in\CC^{n+1}$.  In particular, the Veronese variety, which is related to Waring decomposition, is $\CV_{(d)}$ and the Chow variety $\Var C_{d,n}$ is $\CV_{(1,\ldots,1)}$.

The $d$-forms $[p] \in S^d \PP^n$ such that $p$ admits a rank-$r$ Chow decomposition \cref{eqn_CD} form a Zariski-open subset of the \textit{$r$-secant variety} $\sigma_r(\Var{C}_{d,n}) \subset S^d \PP^n$. Recall that for an irreducible, nondegenerate, projective variety $\Var{V}\subset\PP^N$ the $r$-secant variety is defined as the image of
the \textit{abstract secant variety} $\Sigma_r(\Var{V}) = \overline{\{[\sum_{i=1}^r p_i], ([p_1],\ldots,[p_r]) \}} \subset \PP^N \times \Var{V}^{\times r} $, which is a smooth projective variety of dimension $r(1+\dim\Var{V})-1$, under the
projection map 
\[
\pi_r^{\Var{V}} : \PP^N \times \Var{V}^{\times r}  \to \PP^N;
\]
see, e.g., \cite{Russo2016,Zak1993}.
In \cite{TV2020}, we answered the first question for Chow decompositions of cubics, showing that $\sigma_r(\Var{C}_{3,n})$ is, without exceptions, \textit{nondefective}, which means that $\dim \sigma_r(\Var{C}_{3,n}) = \min\{N, \dim \Sigma_r(\Var{C}_{3,n})\}$ for all $n \in \NN$. This entails that a generic cubic $p \in S^3 \CC^{n+1}$ has rank equal to $r_\text{gen} = \lceil \frac{1}{3n+1} \binom{n+3}{3} \rceil$ for all $n$.

The main contribution of this paper consists of leveraging the main results of \cite{CM2019,TV2020,COV2014,Oeding2012} to tackle the second question on the number of Chow decompositions \cref{eqn_CD} for cubics. In particular, we prove that for almost all subgeneric ranks $r \le r_\text{gen}$ there exists a Zariski-open subset of $\sigma_r(\Var{C}_{3,n})$ such that the cubics in that set admit a \textit{unique} expression as in \cref{eqn_CD}, up to the order of the summands. This is called the \textit{generic $r$-identifiability} of $\Var{C}_{3,n}$. We will thus prove the following result. 

\begin{theorem} \label{thm_main}
Let $\Var{C}_{3,n} \subset S^3 \PP^{n}$ be the degree-$3$ Chow variety of zero-cycles. Then, $\Var{C}_{3,n}$ is generically complex $r$-identifiable up to the generic rank minus $2$:
\[
 r \le r_\text{gen} - 2 = \left\lceil \frac{1}{3n+1} \binom{n+3}{3} \right\rceil - 2.
\]

In other words, the complex Chow decomposition \cref{eqn_CD} of a generic real or complex Chow rank-$r$ cubic $p$ is unique in the sense that the set of completely decomposable forms $\{\prod_{i=0}^{d-1} L_{i,1}, \dots, \prod_{i=0}^{d-1} L_{i,r}\}$ is uniquely determined by $p$.
\end{theorem}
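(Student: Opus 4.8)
The plan is to reduce the infinitely many instances of the statement to a finite list and then to certify that list by computer. The reduction rests on three inputs --- the non-defectivity of every secant variety $\sigma_r(\Var{C}_{3,n})$ from \cite{TV2020}, the general passage from non-defectivity to identifiability of \cite{CM2019}, and the explicit equations of the Chow variety from \cite{Oeding2012} --- while the finite list is dispatched with the Hessian criterion for tangential weak defectivity of \cite{COV2014}. Recall the mechanism behind the last tool: by \cite{COV2014}, $\Var{C}_{3,n}$ is generically $r$-identifiable as soon as it is not $r$-tangentially weakly defective, that is, as soon as the general $r$-tangential contact locus is finite, and the Hessian criterion certifies this at one explicit generic point of $\sigma_r(\Var{C}_{3,n})$ by assembling the symmetric matrix of Hessians, at that point, of a spanning set of local equations of $\Var{C}_{3,n}$ evaluated on the generic hyperplane through the $r$ tangent spaces, and checking that it has maximal rank. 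Making this effective requires explicit equations of $\Var{C}_{3,n}$, which is the role of \cite{Oeding2012}.

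Next I would invoke \cite{CM2019}. Since $\sigma_r(\Var{C}_{3,n})$ is non-defective for all $r$ and $n$ by \cite{TV2020}, the main theorem of \cite{CM2019} yields generic $r$-identifiability of $\Var{C}_{3,n}$ for every subgeneric rank $r\le r_\text{gen}-2$, save possibly for the short explicit list of exceptional varieties classified there and for a bounded window of ranks near $r_\text{gen}$ whose treatment depends on the quantitative contact-locus estimates of \cite{CM2019}. Feeding $r\le r_\text{gen}-2=\lceil\frac{1}{3n+1}\binom{n+3}{3}\rceil-2$ and the equations of \cite{Oeding2012} into those estimates, and comparing the growth $r_\text{gen}\sim\frac{1}{6} n^2$ with $\dim\Var{C}_{3,n}=3n$ and $\dim S^3\PP^n=\binom{n+3}{3}-1$, one checks that $\Var{C}_{3,n}$ is neither on the exceptional list nor in a bad range of ranks once the number of variables $n+1$ is large enough; tracking the constants fixes the threshold at $n+1\ge 104$. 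This leaves the finite list of cases with $n+1\le 103$, and for each such $n$ only the finitely many ranks that \cite{CM2019} leaves open --- in practice a few ranks near $r_\text{gen}$, the remaining ones following from the propagation of identifiability toward small rank built into \cite{CM2019}.

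For each residual pair $(n,r)$ I would then run the Hessian criterion directly: sample a generic Chow decomposition of length $r$, build the associated Hessian matrix, and verify that it attains the expected rank, carrying the computation out modulo a large prime $p$ to keep the linear algebra exact and inexpensive --- legitimate since maximal rank over $\FF_p$ forces maximal rank over $\QQ$, and hence over $\CC$. A single success certifies that $\Var{C}_{3,n}$ is not $r$-tangentially weakly defective, and \cite{CM2019} then spreads this to all $r\le r_\text{gen}-2$ for that $n$. Finally, the stated real conclusion follows from the complex one because the bounded-rank Chow decompositions form the image of the regular map defined by \cref{eqn_CD}, so generic complex $r$-identifiability entails generic real $r$-identifiability, as recalled in the introduction.

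The main obstacle I anticipate is the finite computation rather than the reduction. For the largest cases in the residual list the ambient space $S^3\PP^n$ has dimension $\binom{n+3}{3}-1$, of order $2\times10^5$, and the relevant rank $r$ is several hundred, so the linear-algebra problems behind the Hessian criterion --- notably computing the hyperplane through, and the contact locus along, a configuration of $r$ general tangent spaces --- reach sizes of order $10^5$ and must be solved in exact arithmetic, since a numerical rank drop at the sampled point need not reflect a genuine defect; carrying this out within time and memory limits is exactly what forces the use of high-performance computing resources. A secondary but real difficulty is the bookkeeping in the reduction step: one must confirm that $\Var{C}_{3,n}$ never coincides with an entry of the exceptional list of \cite{CM2019}, and pin down precisely which ranks survive the propagation, so that the finite computation is both sound and exhaustive.
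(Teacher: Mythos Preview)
Your overall architecture is the paper's: reduce to finitely many $n$ via Casarotti--Mella plus the nondefectivity from \cite{TV2020}, then dispatch the residual cases $n\le 102$ with the Hessian criterion of \cite{COV2014}; you even land on the correct threshold $n\ge 103$, and your assessment of the computational bottleneck is accurate. But you have miswired the roles of two of the four ingredients, and as written the plan would not execute.

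First, \cite{Oeding2012} does \emph{not} supply equations of the Chow variety, and the Hessian criterion does not need them. Oeding computes dimensions of \emph{dual varieties} of Chow--Veronese varieties; its role here is to certify that the dual of $\Var{C}_{3,n}$ is a hypersurface for $n\ge 2$, which is equivalent (see \cite[Proposition 4.1]{COV2017}) to $\Var{C}_{3,n}$ being not $1$-weakly defective and hence not $1$-twd. The Hessian criterion itself uses only the obvious \emph{parameterization} $(L_0,L_1,L_2)\mapsto L_0L_1L_2$: one computes the Hessian of this map at a chosen point $p_j$ and contracts it against a generic element of the normal space to $\sigma_r(\widehat{\Var{C}}_{3,n})$; no defining equations of $\Var{C}_{3,n}$ enter. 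Second, \cite{CM2019} carries no ``exceptional list'' to check membership against. The Casarotti--Mella theorem has a single structural hypothesis---that $\Var{V}$ is not $1$-twd---together with the numerical condition $r>2\dim\Var{V}$ and generic finiteness of $\pi_r^{\Var{V}}$. Verifying the not-$1$-twd hypothesis for $\Var{C}_{3,n}$ is exactly what Oeding's result accomplishes. Once that and nondefectivity are in hand, the inequality $\lfloor\frac{1}{3n+1}\binom{n+3}{3}\rfloor>2\dim\Var{C}_{3,n}$ gives the threshold directly, with no further ``quantitative contact-locus estimates'' to feed anything into. For each $n\le 102$ one then checks not-$r$-twd at the single value $r=\lceil\frac{1}{3n+1}\binom{n+3}{3}\rceil-1$; the propagation to smaller $r$ is the elementary fact that not $r$-twd implies not $k$-twd for $k\le r$, not a feature specific to \cite{CM2019}.
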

\begin{remark}

We prove generic identifiabilty under the marginally stronger bound 
 \[
  r \le \left\lfloor \frac{1}{3n+1} \binom{n+3}{3} \right\rfloor - 1.
 \]
This extends \cref{thm_main} to $r \le r_\text{gen}-1$ in the three additional \emph{perfect cases} where $\frac{1}{3n+1} \binom{n+3}{3}$ is an integer, namely $n = 1$, $3$, and $13$. Generic identifiability in these cases is established by computer in \cref{sec_base_cases}.
\end{remark}

This result is almost optimal in the sense that generic $r$-identifiability holds for \emph{at most one additional value} of $r$. Indeed, (generic) $r$-identifiability fails once $r \ge \lfloor \frac{1}{3n+1}\binom{n+3}{3} \rfloor + 1$ because then the projection map $\pi_r^{\Var{C}_{3,n}}$ has strictly positive-dimensional fibres as $r (1 + \dim{} \Var{V}) - 1 > N$.
 
 The main theorem is essentially proved as follows. Since $r$-identifiability implies $k$-identifiability for all $k\leq r$, we need only worry about the upper bound. We then exploit the Casarotti--Mella theorem \cite{CM2019}, which connects $r$-nondefectivity and $(r-1)$-identifiability under a mild technical condition, called not \textit{$1$-tangential weak defectivity} ($1$-twd); see the next section for the precise definition. This not $1$-twd property of a projective variety is equivalent to its \textit{dual variety} being a hypersurface \cite{COV2017}. Oeding \cite{Oeding2012} studied the dimensions of dual varieties of Chow--Veronese varieties and established among others that $\Var{C}_{3,n}$ is, in our terminology, not $1$-twd for $n\ge2$. We already proved in \cite{TV2020} that $\Var{C}_{3,n}$ is always nondefective. Therefore, with all the foregoing observations, the proof is reduced to a finite number of cases in finite dimension that can be treated with the Hessian criterion \cite{COV2014} to conclude generic $r$-identifiability of \cref{eqn_CD} for almost all ranks for cubics, i.e., $d=3$. 
 
Note that $n\leq 1$ is not covered by the foregoing argument, but no positive $r$ satisfies the inequality from \cref{thm_main} in these cases, so we need not worry about them for its proof.  Nevertheless, since every unary $(n=0)$ form is a monomial and every binary $(n=1)$ form may be factored uniquely into a product of linear forms by the fundamental theorem of algebra, it follows that $\Var C_{d,n}$ is everywhere $1$-identifiable for all $d$ when $n\leq 1$, no genericity required.

In the next section we prove \cref{thm_main}. Thereafter, in \cref{sec_chow_not1wd}, we investigate some Riemannian geometry of the real and complex Chow variety. In particular, we compute its \textit{second fundamental form}, hereby (i) establishing the minimality of the smooth locus of the real Chow variety as a Riemannian immersed submanifold of $S^d(\RR)$ and (ii) furnishing an alternative proof of its not $1$-twd property via the Katz dimension formula. The second fundamental form naturally appears in the expression of the Riemannian Hessian of the squared distance function from the smooth locus of the Chow variety. Hence, it can be used in Riemannian quasi-\-Newton optimization algorithms over (products of) the smooth loci of Chow varieties, similar to \cite{Dirckx2019} for the Waring decomposition.

\subsection*{Acknowledgements} We thank Giorgio Ottaviani for reminding us of the connection between dual varieties and tangential weak defectivity and pointing us to Oeding's result \cite[Theorem 1.3]{Oeding2012}.

\section{Generic identifiability of cubic Chow decompositions}

We prove \cref{thm_main} by integrating several known results in \cref{sec_general_cases} that reduce the proof to only a finite number of cases. These remaining cases are treated in \cref{sec_base_cases} by a special-purpose computer program.

\subsection{Proof in the case of many variables}\label{sec_general_cases}
Chiantini and Ciliberto \cite{CC2001} introduced the concept of \emph{$r$-weak defectivity} of a projective variety $\Var{V} \subset \PP^N$ initially as a technique for studying the defectivity of $\Var{V}$'s $r$th secant variety 
\(\sigma_r(\Var{V})\).\footnote{Note that $r$ in this paper equals the number of points, in contrast to the notation used in some papers where $r$ equals the projective dimension of the subspace spanned by these points.}
Recall from \cite{CC2001} that an irreducible, nondegenerate projective variety $\Var{V}\subset \PP^N$ is $r$-weakly defective if the generic hyperplane tangent to $\Var{V}$ at $r$ points is tangent along a positive-dimensional subvariety of $\Var{V}$. Chiantini and Ciliberto later showed in \cite{CC2006} that not $r$-weak defectivity implies generic $r$-identifiability. That is, the generic point $p \in \sigma_r(\Var{V})$ admits only one expression as a linear combination of $r$ elements from $\Var{V}$. 

A powerful sufficient condition for generic $r$-identifiability of $\Var{V}$ was introduced by Chiantini and Ottaviani in \cite{CO2012} and applied to the Segre variety. A variety $\Var{V}$ is called $r$-twd if for $r$ generic, smooth points $p_i \in \Var{V}$, we have that the \textit{$r$-tangential contact locus}
\[
 \{ x \in \Var{V} \mid \Tang{x}{\Var{V}} \subset \langle \Tang{p_1}{\Var{V}}, \ldots, \Tang{p_r}{\Var{V}} \rangle \}
\]
has a positive-dimensional component \cite{CO2012}; herein, $\Tang{x}{\Var{V}}$ denotes the Zariski tangent space to $\Var{V}$ at $x$, and $\langle \cdot, \dots, \cdot \rangle$ denotes the linear span.

We have the following chain of implications of generic properties of a projective variety $\Var{V}$:
\[
\text{not $r$-weak defectivity} \Longrightarrow \text{not $r$-twd} \Longrightarrow \text{$r$-identifiability} \Longrightarrow \text{$r$-nondefectivity},
\]
where the first implication is by definition, the second by \cite[Proposition 2.4]{CO2012}, and the third essentially by definition (see also \cite{CC2006}).
Casarotti and Mella \cite{CM2019} established a partial converse to this chain of implications. They showed that not $(r-1)$-twd is also implied by $r$-nondefectivity if $\Var{V}$ is not $1$-twd and $r$ is sufficiently large. The precise statement we exploit is as follows.

\begin{theorem}[Casarotti and Mella \cite{CM2019}] \label{lem_CM}
 Let $\Var{V} \subset \PP^n$ be an irreducible, nondegenerate projective variety that is not $1$-twd. If $r > 2 \dim \Var{V}$ and the projection map $\pi_{r}^{\Var{V}}$ is generically finite, then $\Var{V}$ is $(r-1)$-identifiable.
\end{theorem}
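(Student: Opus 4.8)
Since this is a theorem of Casarotti and Mella, one may simply invoke \cite{CM2019}; were one to reprove it, the plan would run as follows. As the implication ``not $(r-1)$-twd $\Rightarrow$ $(r-1)$-identifiable'' is already in hand (the second and third arrows of the chain above), it suffices to show that $\Var{V}$ is not $(r-1)$-twd, and I would do this by contradiction, producing a positive-dimensional general fibre of $\pi_r^{\Var{V}}$ from the assumption that $\Var{V}$ is $(r-1)$-twd. A first observation is monotonicity: enlarging a finite set of points only enlarges the span of the corresponding tangent spaces, so ``$\Var{V}$ is $k$-twd'' implies ``$\Var{V}$ is $(k+1)$-twd''. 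Since $\Var{V}$ is not $1$-twd but is $(r-1)$-twd, there is a least index $k_0$ with $2 \le k_0 \le r-1$ for which $\Var{V}$ is $k_0$-twd; in particular the $(k_0-1)$-tangential contact locus of $k_0-1$ general points is finite.

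The core step converts excess tangency into a moving decomposition. Fix general $p_1,\dots,p_{k_0}\in\Var{V}$, put $\Lambda=\langle \Tang{p_1}{\Var{V}},\dots,\Tang{p_{k_0}}{\Var{V}}\rangle$, and let $\Gamma=\{x\in\Var{V}:\Tang{x}{\Var{V}}\subseteq\Lambda\}$ be the $k_0$-tangential contact locus, which by assumption has a positive-dimensional component $Z$; by Terracini's lemma, $\Lambda=\Tang{z}{\sigma_{k_0}(\Var{V})}$ with $z=p_1+\dots+p_{k_0}$. Using that the $(k_0-1)$-contact locus is finite, one argues that $Z$ is ``created'' by the last tangent space, that $p_{k_0}$ may be chosen general in $Z$, and that $Z$ spans a linear subspace of $\Lambda$; letting $p_{k_0}$ move along $Z$ and adjusting the companion points accordingly then yields a positive-dimensional family of $k_0$-term decompositions of $z$ --- unless this family is compensated by a drop in $\dim \sigma_{k_0}(\Var{V})$, which is where nondefectivity will have to enter.

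To reach a contradiction at level $r$, append $r-k_0\ge1$ further general points $q_1,\dots,q_{r-k_0}\in\Var{V}$ to the moving configuration. The hypothesis $r>2\dim\Var{V}$ provides exactly enough room for these appended points to lie in expected position relative to the contact locus $Z$ (whose dimension is at most $\dim\Var{V}$), so that, invoking the nondefectivity of $\sigma_r(\Var{V})$ that is packaged in the generic finiteness of $\pi_r^{\Var{V}}$, Terracini's lemma applies with the expected dimension to the enlarged configuration. One then concludes that the general point $w=z+q_1+\dots+q_{r-k_0}$ of $\sigma_r(\Var{V})$ admits a positive-dimensional family of $r$-term decompositions, so the general fibre of $\pi_r^{\Var{V}}$ is positive-dimensional, contradicting the hypothesis. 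Hence $\Var{V}$ is not $(r-1)$-twd, and therefore $(r-1)$-identifiable.

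The main obstacle is the core step: a positive-dimensional tangential contact locus does not by itself produce a genuine alternative decomposition, and the argument must exploit nondefectivity --- i.e., the general position of the relevant tangent spaces --- to turn surplus tangency into an actual second decomposition. A related nuisance is that nondefectivity is not monotone in the number of summands, so one cannot pass freely between $\sigma_{k_0}(\Var{V})$, $\sigma_{k_0-1}(\Var{V})$ and $\sigma_r(\Var{V})$; the clean remedy is to carry the moving family at level $r$ throughout, and the bound $r>2\dim\Var{V}$ is precisely what makes the accompanying dimension bookkeeping go through.
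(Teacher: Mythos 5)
The paper does not prove this statement at all: it imports it verbatim from Casarotti and Mella \cite{CM2019}, which is exactly your first and only load-bearing move, so your proposal matches the paper's approach. The sketch you append is a plausible outline of the strategy in \cite{CM2019}, but, as you yourself concede, it leaves the core step (turning a positive-dimensional tangential contact locus into a genuine second decomposition, using nondefectivity) unestablished, so it should be read as a plan rather than a self-contained proof --- which is acceptable here precisely because the paper, too, relies on the citation.
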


It is known to the experts that a projective variety $\Var{V}\subset\PP^N$ is not $1$-weakly defective (and hence not $1$-twd) if and only if its \emph{dual variety} $\Var{V}^\vee$ is a hypersurface; this is stated explicitly in \cite[Proposition 4.1]{COV2017}. In the case of Chow--Veronese varieties, the dimension of the dual variety was computed by Oeding in Theorem 1.3 of \cite{Oeding2012}. From this result follows that Chow varieties $\Var{C}_{d,n}$ with $d \ge 3$ and $n \ge 2$ are not $1$-weakly defective.\footnote{As communicated to us by Giorgio Ottaviani, there appears to be a small typo in the statement of \cite[Theorem 1.3]{Oeding2012}, namely $d \ge 2$ should be $d \ge 3$. In \cref{sec_chow_not1wd}, we provide an alternative, elementary proof when $3 \le d \le n+1$ based on the Katz dimension formula \cite{GKZ1994}.}

We proved in \cite{TV2020} that $\Var{C}_{3,n}$ is never defective, so \cref{lem_CM} can be applied if $2 \dim \Var{C}_{3,n} < \lfloor \frac{1}{3n+1} \binom{n+3}{3}\rfloor$. Since $\dim\Var C_{3,n} = 3n + 1$, this inequality is only satisfied if
\(
 n \ge 103.
\)
Consequently, the task of proving \cref{thm_main} has been reduced to proving identifiability of the Chow varieties $\Var{C}_{3,n}$ with $n \le 102$. This is settled in the next subsection.

\begin{remark}
In \cite{TV2020}, we also proved that $\Var C_{d,3}$ is never defective, but the only additional case where \cref{prop_n1wd_chow} applies is when $d=4$.  Applying \cref{lem_CM} could then establish at most $1$-identifiability, which holds trivially.
\end{remark}

\begin{remark}
In \cite{AV2018}, the nondefectivity of the $(d-1,1)$-Chow--Veronese variety was proved. Hence it would be tempting to apply the same reasoning in this setting as well. Unfortunately, as proved by Oeding \cite{Oeding2012}, this variety is, in fact, $1$-weakly defective so that Casarotti and Mella's result cannot be applied directly.
\end{remark}

\subsection{Computer proof for the remaining cases in few variables} \label{sec_base_cases}

For concluding the proof of \cref{thm_main}, $r$-identifiability of the remaining Chow varieties $\Var{C}_{3,n}$ with $2 \le n \le 102$ is verified by a computer algorithm, described next. 

Recall that not $r$-twd implies not $k$-twd for all $k \le r$, so it suffices to check the case $r = \lceil \frac{1}{3n+1} \binom{n+3}{3} \rceil - 1$. 
A method for verifying not $r$-twd was described in \cite{BCO2014,COV2014} and applied to the Segre variety. The Hessian criterion \cite[Section 2]{COV2014} is a particularly efficient implementation that can be applied to any nondegenerate projective variety $\Var{V}\subset\PP^N$ whose cone has a polynomial parameterization $f : \CC^n \to \CC^{N+1}$ and whose $r$-secant variety is not defective. Indeed, it can be verified that the methodology from \cite[Section 2]{COV2014} applies verbatim in this setting. Applying it to the Chow variety $\Var{C} = \Var{C}_{d,n}$, we obtain the following algorithm:
\begin{enumerate}
 \item[S1.] Choose $r$ generic points $[p_j] \in {\Var{C}}$ such that $[P] = [p_1 + \cdots + p_r]$ is a smooth point of $\sigma_r(\Var{C}).$
 \item[S2.] Construct the tangent space $T = \langle \Tang{p_1}{\widehat{\Var{C}}}, \ldots, \Tang{p_r}{\widehat{\Var{C}}} \rangle = \Tang{P}{\sigma_r(\widehat{\Var{C}})} \subset S^{d} \CC^{n+1}$. Equality holds because of Terracini's lemma \cite{Terracini1911}. 
 \item[S3.] Let $N = (n_1, \ldots, n_q) \subset S^d \CC^{n+1}$ be a basis for the orthogonal complement of $\Tang{P}{\sigma_r(\widehat{\Var{C}})}$ in the Hermitian inner product. Note that the span $\langle N \rangle$ is the normal space $\Norm{P}{\sigma_r(\widehat{\Var{C}})}$. Choose a generic element $\eta \in \Norm{P}{\sigma_r(\widehat{\Var{C}})}$.
 \item[S4.] Choose any $p_j$ and let the second fundamental form of $\widehat{\Var{C}}$ at $p_j$ be $\SFF_{p_j}$. Then, compute the ``Hessian matrix'' $H_{j} := \SFF_{p_j}^*(\eta)$.
 Practically this can be accomplished by computing the components of the Hessian ``matrix'' of the parameterization  
 \[
 \CC^{n+1}\times\CC^{n+1}\times\CC^{n+1} \mapsto S^3 \CC^{n+1}, (L_1, L_2, L_3) \mapsto L_1 L_2 L_3
 \]
 which are elements of $S^d \CC^{n+1}$, and contracting them with $\eta$ to obtain $H_{j}$.
\end{enumerate}

Note that step S4 is slightly different from the corresponding steps in \cite[Algorithm 3]{COV2014} where instead $\SFF_p^*(n_i)$ is computed for all $i=1,\ldots, q$ and then all these Hessian matrices are stacked, rather than computing $\SFF_p^*(\eta)$ which corresponds to randomly combining these matrices. The present approach is slightly more efficient.

In our implementation, we construct in S2 a matrix $T$ whose rows contain the tangent vectors. Then, in S3, we reduce the matrix $T$ to row Echelon form, so that the tangent space is spanned by the rows of
\( 
T \simeq \begin{bmatrix} I_{(3n+1)r} & X \end{bmatrix} P,
\)
where $P$ is a permutation matrix determined by the pivot selection during the reduction to Echelon form.
Therefore $\eta \in \langle N \rangle$ if and only if $\eta = P^{-1} (-X f_0, f_0)$ for any choice of $f_0 \in \CC^c$, where $c = \operatorname{codim} \widehat{\Var{C}}_{3,n} = \binom{n+3}{3} - (3n+1)r$. Consequently, the normal vectors can be parameterized via the free variables $f_0$.

In order to implement this algorithm reliably on a computer, we proceed as usual.
The points in S1 are chosen as $p_j = \prod_{i=1}^d (a_{0,i,j} x_0 + \cdots + a_{n,i,j} x_n)$ where the $a_{k,i,j}$ are sampled identically and independently distributed (i.i.d.) from the uniform distribution on $\ZZ_m = \{0, 1, \ldots, m-1\}$, where $m$ is some prime number. In step S3, we set $\eta = P^{-1}(-X f_0, f_0)$ by randomly sampling the elements of $f_0 \in \ZZ_m^{c}$ i.i.d.~from the uniform distribution.
The computations in foregoing algorithm are then all performed over the field $\ZZ/m\ZZ$. The validity of this standard approach, especially with respect to the genericity of the chosen points, was explained in \cite[Section 3]{COV2014}.

We implemented foregoing algorithm in C++ (only for $d=3$) by adapting earlier codes that were developed for \cite{COV2014,AV2018,TV2020}. The code was compiled with the GCC version 6.4.0 with \texttt{-O3 -DNDEBUG} flags active. All computations are performed in the prime field $\ZZ_{202001}$.\footnote{In our initial tests we had selected $\ZZ_{8191}$ as prime field as in \cite{AV2018,TV2020}, however for large $n$ this frequently led to failures ($T$ in step S2 failed to be of the correct dimension). For this reason, we increased the size of the prime field which quenched the problem. The specific prime chosen is the month in which the experiments were performed; January 2020.} We used \texttt{ReducedRowEchelonForm} from FFLAS--FFPACK \cite{FFLAS} for apply full Gaussian elimination to $T$. This package depends on Givaro and a BLAS implementation; we used OpenBLAS version 0.2.20.
The program code can be found in the ancillary files accompanying the arXiv preprint. 

Given the large sizes of the matrices involved, the computations for $n=2,\ldots,102$ were performed on KU Leuven/UHasselt's Tier-2 Genius cluster, which is part of the supercomputer of the Vlaams Supercomputer Centrum (VSC). 
Two different types of nodes were used. For $n=2,\ldots,90$, we used ``skylake'' nodes containing two Xeon\circledR{} Gold 6140 CPUs (18 physical cores, 2.3GHz clock speed, 24.75MB L3 cache) with 192GB of main memory, while for $n=91,\ldots,102$ we employed a ``bigmem'' node containing the same 2 Xeon\circledR{} Gold 6140 CPUs but with 768GB of main memory.\footnote{According to the web page \url{https://vlaams-supercomputing-centrum-vscdocumentation.readthedocs-hosted.com/en/latest/leuven/tier2_hardware/genius_hardware.html}} We requested all physical cores from the scheduling software so that OpenBLAS would have $36$ threads available for parallel processing.


The program generates certificates of not $r$-twd that record (1) the randomly chosen coordinates of $k_i, l_i, m_i$ (with respect to the standard basis $x_0, \ldots, x_n$) that define the points $p_i = k_i l_i m_i \in \widehat{\Var{C}}_{3,n}$ and (2) the vector $f_0 \in \ZZ_m^{c}$ of randomly chosen free variables. Applying the aforementioned algorithm to this configuration proves generic not $r$-twd. An example of such a certificate is shown below.\footnote{The full collection of certificates can be downloaded from the authors' web pages.}

{\small
\begin{verbatim}
Using random seed: 1591688259
k_0 = [17068  9508  8836  2681 14273  2196]
l_0 = [10549  3190 13747 17792 14579 19854]
m_0 = [ 3460  1587 17806  9155 16408 18933]
k_1 = [  328 11046  4677 16618 14053  1170]
l_1 = [ 2597  8062  6732   112 17180  6488]
m_1 = [13042   243 14543  8217  2423  5613]
k_2 = [ 2758   363 13376  9583  8315  5014]
l_2 = [19182  1662 19793  1788  5975 17021]
m_2 = [ 3018   609 15188 18700  1096 13016]
Constructed T in 0.001s.
Computed the rank of the 48 x 56 matrix T over F_20201 in 0s.
Found 48 vs. 48 expected.
f_0 = [ 5257  5355 19748  3457  1773 19861 15532 19684]
Constructed an element from the null space in 0.002s.
Constructed second fundamental form at k_0 l_0 m_0 in 0.001s.
Computed the rank of the 18 x 18 second fundamental form at 
    k_0 l_0 m_0 over F_20201 in 0s.
Found 15 vs. 15 expected.
5-nTWD is TRUE
Total computation took 0.005s.
\end{verbatim}}
 
The program verified for all $n = 2, \ldots, 102$ not $r$-twd holds at $r=\lceil \frac{1}{3n+1} \binom{n+3}{3} \rceil-1$.\footnote{For $n=1$, the generic rank is $\lceil \frac{4}{4} \rceil = 1$, and generic $1$-identifiability always holds.} Only three cases ($n = 71, 75, 86$) were not proved in the first round, due to unfortunate random choices (of points or element from the normal space). These were retested with a different random seed on skylake nodes, and then immediately found to be not $r$-twd, as expected. When compiling the final results, we noted that the certificate files for $n = 5, 6$ were corrupted, caused by an unknown problem. Therefore, the certificates were generated anew on a computer containing one Intel Core i7-4770K CPU (4 physical cores, 3.5GHz, 8MB L3 cache) and 32GB of memory. The cumulative time for establishing not $r$-twd of $\Var{C}_{3,n}$, counting only the successful runs, is displayed in \cref{fig_time}. The total time was about 9 days and 5 hours.

\begin{figure}[tb]
\caption{The cumulative time to prove not $\left(\lceil \frac{1}{3n+1} \binom{n+3}{3} \rceil-1\right)$-twd of the degree-$3$ Chow variety of $0$-cycles $\Var{C}_{3,k}$ for all $2 \le k \le n$.} 
\label{fig_time}
\includegraphics[width=\textwidth]{./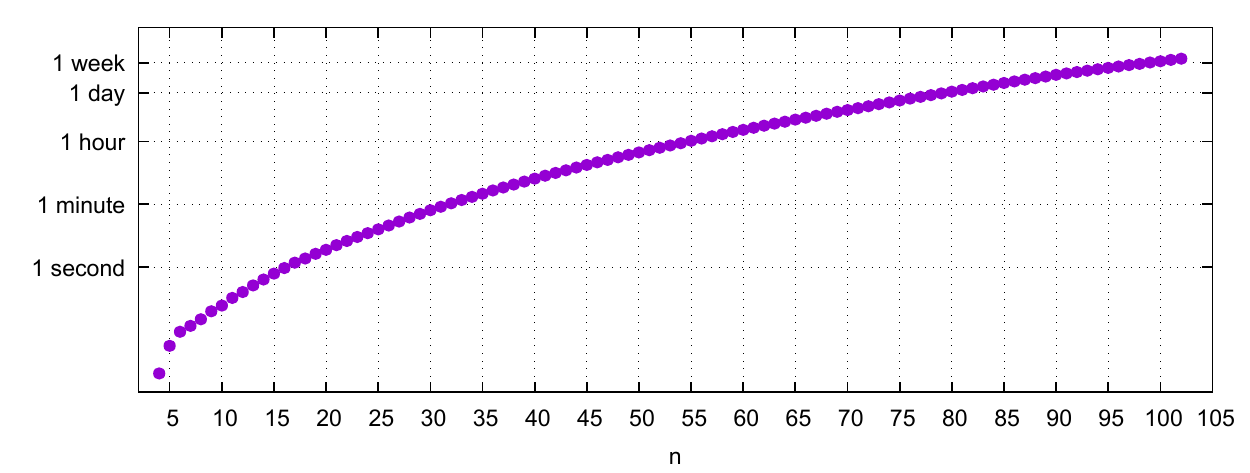} 
\end{figure}

Note that the largest case required to conclude the proof involved computations in $S^3 \CC^{103}$, whose dimension is $187460$. The largest computational burden occurs in S3, where a reduced echelon form of an approximately square matrix of size $\dim S^3 \CC^{n+1}$ should be computed. Recall that the cost of the naive algorithm is $\mathcal{O}(N^3)$ elementary operations over $\ZZ_m$. Fortunately, Strassen \cite{Strassen1969} first showed that faster algorithms exist, such as the slab recursive algorithm used by FFLAS--FFPACK \cite{JPS2013}. Indeed, the time increased only by about $7.204\%$ from 15h 34m 1s to 16h 41m 19s when going from $n=101$ to $102$, while the corresponding increase in memory consumption was $5.866\%$ from $265.28$GB to $280.84$GB. Then, $1.05866^{x/2} = 1.07204$ for $x \approx 2.44$, which is quite close to Coppersmith and Winograd's $\omega \approx 2.376$ exponent of matrix multiplication \cite{CW1990}.

\section{Some Riemannian geometry of the Chow variety} \label{sec_chow_not1wd}

Before we were aware of Oeding's result \cite{Oeding2012}, we had proved not $1$-weak defectivity of the Chow variety using a direct approach based on the Katz dimension formula \cite{GKZ1994}. Since this approach highlights some of the Riemannian and K\"ahlerian geometries of the smooth loci of the real and complex Chow varieties respectively, can clarify a few details about the tangent space required in \cref{sec_base_cases}, and furnishes an alternative proof of part of \cite[Theorem 1.3]{Oeding2012}, we decided to include it.

Recall that not $1$-twd of a projective variety $\Var{V}\subset\PP^N$ is implied by not $1$-weakly defectivity.
To show the latter, we can rely on the equivalence stated in Remark 3.1(ii) of \cite{CC2001} between not $1$-weak defectivity and the nondegeneracy of the Gauss map $\gamma : \Var{S} \to \mathbb{G}(d, N), \;x \mapsto \Tang{x}{\Var{V}}$, where $\Var{S}$ is the smooth locus of $\Var{V}$, $d=\dim\Var{V}$, and $\mathbb{G}$ is the projective Grassmannian. Note that $\gamma$ is nondegenerate, by semicontinuity of matrix rank, if there exists a point $p$ of $\Var{S}$ such that the derivative $\mathrm{d}_p \gamma$ is injective. Griffiths and Harris \cite{GH1979} explained how this linear map $\mathrm{d}_p \gamma : \Tang{p}{\Var{S}} \to \Tang{\Tang{p}{\Var{S}}}{\mathbb{G}(d, N)}$ can be interpreted as the \textit{projective second fundamental form} $|\SFF_p|$ at $p\in \Var{S}$.

We briefly recall the definition of the second fundamental form from K\"ahler geometry; for more details see \cite{GH1978,Huybrechts2005,KN1969,Kodaira1986}.  
A complex submanifold $\Var{M}\subset\CC^{N+1}$ of dimension $d$ is a $2d$-dimensional real-differentiable manifold along with a holomorphic atlas. The holomorphic tangent bundle $\Tang{}{\Var{M}}$ of $\Var{M}$ is a holomorphic vector bundle of rank $d$ with fiber at $p \in \Var{M}$ equal to the complex vector space
\[
 \Tang{p}{\Var{M}} := \left\{ \frac{\mathrm{d}}{\mathrm{d} z} \gamma_p(z) \mid \gamma_p(z) \subset \Var{M} \text{ is a holomorphic curve and } z \in \CC \right\} \subset \CC^{N+1},
\]
where $\mathrm{d}/\mathrm{d} z$ is the complex derivative.
By equipping the tangent space $\Tang{p}{\CC^{N+1}}$ with the Hermitian inner product $h_p( x, y ) := x^T \overline{y}$, $\CC^{N+1}$ becomes a \textit{K\"ahler manifold} \cite[Chapter 3]{Huybrechts2005}. 
A K\"ahler manifold admits a unique \textit{Chern connection}\footnote{This is a connection that is compatible with the Hermitian metric and the holomorphic structure of the tangent bundle.} $\nabla$ on the holomorphic tangent bundle \cite[Proposition 4.2.14]{Huybrechts2005}, and for $\CC^{N+1}$ it coincides with the usual exterior differential.
Equipping the submanifold $\Var{M} \subset \CC^{N+1}$ with the latter's K\"ahler metric $h$ turns it into a K\"ahler submanifold. The normal space $\Norm{p}{\Var{M}} \subset \CC^{N+1}$ of such a manifold at $p$ is defined as the orthogonal complement of the tangent space $\Tang{p}{\Var{M}}$ in $\CC^{N+1}$.

The second fundamental form $\SFF$ of $\Var{M} \subset \CC^{N+1}$ at $p\in\Var{M}$ is 
\[
 \SFF_p(X,Y) = \mathrm{P}_{\mathrm{N}_p \Var{M}} (\nabla_X Y)  \in \Norm{p}{\Var{M}}
\]
where $X,Y$ are sections of the holomorphic tangent bundle $\Tang{}{\Var{M}}$ of $\Var{M}$, and $\mathrm{P}_{\mathrm{N}_p \Var{M}}$ is an orthogonal projector onto the normal space $\Norm{p}{\Var{M}}$ of $\Var{M}$ at $p$. The second fundamental form is symmetric in that $\SFF_p(X,Y)=\SFF_p(Y,X)$ \cite{GH1978}, and by dualization it can be interpreted as the map $\SFF_p^* : \Norm{p}{\Var{M}} \to S^2(\Tang{p}{\Var{M}})$ \cite[Example 17.11]{Harris1992}.

The equivalent definition over the reals (for ordinary real submanifolds $\Var{M}\subset\RR^{N+1}$) is obtained by replacing $\CC$ by $\RR$, ``complex'' by ``real,'' ``K\"ahler'' by ``Riemannian,'' the Hermitian inner product by the Euclidean inner product $h_p(x,y)=x^T y$, ``holomorphic'' by ``analytic,'' and ``Chern'' by ``Levi--Civita'' in the above discussion; see \cite{Lee2013,Lee1997,doCarmo1993}. 

In the remainder of this section, we drop the subscript of $\Var{C}_{d,n}$, i.e., $\Var{C} = \Var{C}_{d,n}$.

\subsection{Minimality of the smooth locus of the Chow variety}
Let $\bbk = \RR$ or $\CC$.  
We establish minimality of the smooth locus of the Chow variety $\Var{C}$ by partially computing the second fundamental form.

Let $L_0, \ldots, L_{d-1} \in \bbk^{n+1}$ and take the standard coordinates $x_0, \ldots, x_n$ on $\bbk^{n+1}$. Then, the tangent space to $\widehat{\Var{C}} \subset S^d \bbk^{n+1}$ at the smooth point $p = L_0 \cdots L_{d-1}$ is given by 
\[
\Tang{p}{\widehat{\Var{C}}} = \langle x_0 L_1 \cdots L_{d-1}, \ldots, x_n L_1 \cdots L_{d-1}, \ldots, x_0 L_0 \cdots L_{d-2}, \ldots, x_n L_0 \cdots  L_{d-2}\rangle.
\]
Let $0 \le k < d$ and $0 \le i \le n$. The curve 
\[
 \gamma_{k,i}(t) = (L_k + t x_i) \prod_{0 \le \alpha \ne k < d} L_\alpha
\]
with $t\in \bbk$ is verified to pass through $p$ and is tangent along a vector field extension $E_{k,i}$ of the tangent bundle $\Tang{}{\gamma_{k,i}} = \langle x_i \prod_{\alpha\ne k} L_\alpha \rangle \subset \Tang{}{\widehat{\Var{C}}}$; hence $\gamma_{k,i}$ is the integral curve associated with this vector field $E_{k,i}$ through $p$.
It follows that
\[
E_{k,i}|_{\gamma_{l,j}(t)}
 = \begin{cases}
 x_i (L_l + t x_j) \prod_{0 \le \alpha \ne k, l < d} L_\alpha & \text{if } k \ne l,  \\
 x_i \prod_{0 \le \alpha \ne k < d} L_\alpha & \text{if } k = l.
 \end{cases}
\]

The second fundamental form of the smooth locus of $\widehat{\Var{C}}$ as a Riemannian ($\bbk=\RR$) and K\"ahlerian ($\bbk=\CC$) submanifold of $S^d \bbk^{n+1} \simeq \bbk^{\binom{n+d}{d}}$ with the standard inner product inherited from the latter space\footnote{Note that this is not the ``usual'' inner product weighted with multinomials that is typically used for $S^d \bbk^{n+1}$. The result is nevertheless the same.} is the projection of the directional derivative:
\begin{align}\label{eqn_sff}
 \SFF_p(E_{k,i}, E_{l,j}) 
 &= \mathrm{P}_{\mathrm{N}_p \widehat{\Var{C}}}\left( \frac{\mathrm{d}}{\mathrm{d}t}\Big|_{t=0} E_{k,i}|_{\gamma_{l,j}(t)} \right)
 = 
 \begin{cases}
  \mathrm{P}_{\mathrm{N}_p \widehat{\Var{C}}}\left( q_{i,j,k,l} \right) & \text{if } k \ne l, \\
  0 & \text{if } k = l,
 \end{cases}
\end{align}
where 
\(
q_{i,j,k,l} = x_i x_j \prod_{0\le\alpha \ne k,l < d} L_\alpha.
\)

Since $\SFF_p(E_{k,i}, E_{k,i}) = 0$ for all $0\le k< d$ and $0\le i\le n$, the trace of $\SFF_p$ is identically zero. This implies that the mean curvature is zero, and so we have proved the following result.

\begin{proposition}
The smooth locus of the real Chow variety is a minimal immersed Riemannian submanifold of $S^d(\RR^{n+1})$ for $d \ge 2$ and $n\ge0$.
\end{proposition}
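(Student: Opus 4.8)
The plan is to show that the mean curvature vector field of the smooth locus $\Var{S}$ of $\widehat{\Var{C}}$ vanishes identically, which is precisely the assertion that $\Var{S}$ is a minimal submanifold. That $\Var{S}$ is an immersed real-analytic submanifold of $S^d(\RR^{n+1})$ of dimension $dn+1$ is a direct consequence of $\widehat{\Var{C}}$ being the image of the real-analytic map $(L_0,\dots,L_{d-1})\mapsto L_0\cdots L_{d-1}$, whose differential has constant rank $dn+1$ over a Zariski-open subset of $(\RR^{n+1})^d$. Since the mean curvature is a continuous normal field and $\Var{S}$ is connected, it suffices to verify its vanishing at a generic point $p=L_0\cdots L_{d-1}$, with the inner product on $S^d(\RR^{n+1})$ chosen as in the preceding discussion.

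First I would record the tangent space $\Tang{p}{\widehat{\Var{C}}}=V_0+\cdots+V_{d-1}$, where $V_k:=\big(\prod_{\alpha\ne k}L_\alpha\big)\langle x_0,\dots,x_n\rangle=\langle E_{k,0},\dots,E_{k,n}\rangle$, and then compute $\SFF_p$ along the integral curves $\gamma_{k,i}$, arriving at $\SFF_p(E_{k,i},E_{l,j})=\mathrm{P}_{\mathrm{N}_p\widehat{\Var{C}}}(q_{i,j,k,l})$ for $k\ne l$ and $\SFF_p(E_{k,i},E_{l,j})=0$ for $k=l$. The structural point behind the $k=l$ vanishing is that $V_k$ is the tangent space at $p$ of the linear subspace $\big(\prod_{\alpha\ne k}L_\alpha\big)\langle x_0,\dots,x_n\rangle\subset\widehat{\Var{C}}$; since a linear subspace is totally geodesic in $S^d(\RR^{n+1})$, the Gauss formula forces $\SFF_p$ to vanish on $V_k$. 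Thus $\SFF_p$ vanishes on each block $V_k$, and these blocks span $\Tang{p}{\widehat{\Var{C}}}$.

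The main step, and the one I expect to be the real obstacle, is to deduce from this that the metric trace $\mathrm{tr}_g\SFF_p$, i.e.\ the mean curvature vector, vanishes. This is not a formal consequence of ``$\SFF_p$ vanishes on a spanning family of subspaces'': the blocks $V_k$ pairwise meet only along the line $\langle p\rangle$ and are not mutually orthogonal (generically $V_l\cap V_k^{\perp}=0$ for $l\ne k$), so there is no orthonormal basis of $\Tang{p}{\widehat{\Var{C}}}$ with every vector contained in some $V_k$, and the cancellation in the trace genuinely uses the shape of the induced metric. The route I would take is to exploit the parametrization $F\colon(\RR^{n+1})^d\to S^d(\RR^{n+1})$, $F(L_0,\dots,L_{d-1})=\prod_\alpha L_\alpha$: it is affine-linear in each block of variables $L_k$, so every scalar component of $F$, equivalently $\ell\circ F$ for each linear functional $\ell$, is a harmonic polynomial on $(\RR^{n+1})^d$, because the Euclidean Hessian of $\ell(L_0\cdots L_{d-1})$ has all of its $(k,k)$-blocks equal to zero. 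Splitting the $S^d(\RR^{n+1})$-valued Hessian of $F$ at a generic point into its components tangent and normal to $\widehat{\Var{C}}$ and taking traces then pins the normal contribution, which carries the mean-curvature information, down to zero; the remaining work is the bookkeeping that converts this into the vanishing of $\mathrm{tr}_g\SFF_p$, using precisely the block structure of $\SFF_p$ established above. (A direct Gram-matrix computation with the explicit $\SFF_p$ above, organized along the same block structure, is an alternative that avoids the submersion language but not the calculation.)

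Once $\mathrm{tr}_g\SFF_p=0$ is in hand for generic $p$, hence by continuity for all of $\Var{S}$, the smooth locus of the real Chow variety is minimal, which proves the proposition; the hypotheses $d\ge2$ and $n\ge0$ enter only to ensure that $\widehat{\Var{C}}$ is a genuine (possibly singular) variety carrying a well-defined smooth locus and the displayed tangent-space description.
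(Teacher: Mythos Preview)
Your computation of $\SFF_p$ and the observation that it vanishes on each block $V_k$ match the paper exactly. The paper, however, does \emph{not} engage with the step you flag as the real obstacle: it simply asserts that because $\SFF_p(E_{k,i},E_{k,i})=0$ for all $k,i$, ``the trace of $\SFF_p$ is identically zero,'' and concludes minimality. So you have correctly identified that the passage from block-vanishing to vanishing of the \emph{metric} trace is the nontrivial point, and the paper offers no argument for it beyond the one-line assertion.

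The route you propose to close this gap does not work as stated. Harmonicity of the multilinear parametrization $F$ gives $\sum_\alpha \SFF_p\bigl(dF_x e_\alpha,\,dF_x e_\alpha\bigr)=0$ for an orthonormal basis $\{e_\alpha\}$ of $(\RR^{n+1})^d$; equivalently, for each normal direction $\eta$ one obtains $\mathrm{tr}\bigl((dF_x)(dF_x)^{T}\,S_\eta\bigr)=0$ where $S_\eta$ is the shape operator. The mean curvature, by contrast, is $\mathrm{tr}(S_\eta)$. These coincide only when $(dF_x)(dF_x)^{T}$ is a scalar multiple of the identity on $\Tang{p}{\widehat{\Var C}}$, i.e., when the pushforward frame $\{E_{k,i}\}$ is a tight frame. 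At a generic smooth point it is not: for instance with $d=2$, $L_0=x_0$, $L_1=x_0+x_1$ one has $\langle E_{0,2},E_{1,2}\rangle=1\neq 0$, and more generally the Gram matrix of $\{dF_x e_\alpha\}$ has nontrivial off-diagonal structure. So the ``bookkeeping'' you allude to is not a routine rewriting---it is exactly the missing content, and neither the harmonic-map decomposition nor the block structure of $\SFF_p$ by itself supplies it. You would need either an additional identity tying $(dF_x)(dF_x)^T$ to the metric in a way that forces the weighted and unweighted traces to agree, or a genuinely different argument (e.g., working at the monomial point $p=x_0\cdots x_{d-1}$ where the frame \emph{is} orthogonal and then propagating by analyticity---but note that $O(n+1)$ does not act isometrically for the monomial-orthonormal inner product the paper uses, so one cannot simply transport the conclusion by symmetry).
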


\begin{remark}
 The equivalent statement for the smooth locus of the complex Chow variety holds as well by the foregoing argument. However, it is a corollary of the fact that every K\"ahlerian immersed submanifold is minimal by \cite[Theorem 3.1.2]{Simons1968}.
\end{remark}

\subsection{An alternative proof of Oeding's result}

The projective second fundamental form $|\SFF|$ of a projective submanifold $\Var{V}^n \subset\PP^N$ was defined in \cite{GH1979} as the linear system of quadrics formed by the restriction of the second fundamental form $\SFF$ of the cone $\widehat{\Var{V}}$ over $\Var{V}$ to the frame $(E_1, \ldots, E_{n})$, where $(E_0,\ldots,E_{n})$ is a frame for $\Tang{}{\widehat{\Var{V}}}$ such that $E_0|_p$ (the value of the vector field at $p$) lies over $[p]$ and $(E_0,\ldots,E_n)|_{p}$ spans $\Tang{p}{\widehat{\Var{V}}}$ at $p \in \widehat{\Var{V}}$. This leads to the following characterization. 

\begin{lemma}\label{lem_equiv_not1wd_sff}
Let $[p]$ be a smooth point of a reduced, irreducible, nondegenerate projective variety $\Var{V} \subset \PP^N$. If there exists a normal direction $x \in \Norm{p}{\widehat{\Var{V}}}$ such that $\SFF^*_p(x) \in S^2(\Tang{p}{\widehat{\Var{V}}})$ is invertible on $S^2(\Tang{[p]}{\Var{V}})$, then $\Var{V}$ is not $1$-weakly defective.
\end{lemma}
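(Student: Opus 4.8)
The plan is to run the argument already set up in the preceding paragraphs. By Remark~3.1(ii) of \cite{CC2001}, $\Var V$ is not $1$-weakly defective if and only if its Gauss map $\gamma\colon \Var S \to \mathbb{G}(\dim\Var V, N)$, $x \mapsto \Tang{x}{\Var V}$, is nondegenerate; and since the rank of $\mathrm{d}\gamma$ is lower semicontinuous, it suffices to exhibit a single smooth point at which $\mathrm{d}\gamma$ is injective. I would take that point to be $[p]$ and deduce injectivity of $\mathrm{d}_{[p]}\gamma$ from the hypothesis on $\SFF^*_p(x)$.

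To do so I would first make the Griffiths--Harris interpretation of $\mathrm{d}_{[p]}\gamma$ explicit \cite{GH1979}. Fix a local frame $E_0, E_1, \dots$ of $\Tang{}{\widehat{\Var V}}$ near $p$ with $E_0|_p$ spanning the line $\langle p\rangle$, so that $E_1|_p, E_2|_p, \dots$ descend to a basis of $\Tang{[p]}{\Var V} \cong \Tang{p}{\widehat{\Var V}}/\langle p\rangle$. Identifying the tangent space to the Grassmannian at $\Tang{p}{\widehat{\Var V}}$ with $\operatorname{Hom}\bigl(\Tang{p}{\widehat{\Var V}}, \Norm{p}{\widehat{\Var V}}\bigr)$ via the Hermitian (resp.\ Euclidean) metric, the statement that $\mathrm{d}_{[p]}\gamma$ ``is'' the projective second fundamental form $|\SFF_p|$ means precisely that $\mathrm{d}_{[p]}\gamma$ sends $v \in \Tang{[p]}{\Var V}$ to the homomorphism $w \mapsto \SFF_p(v, w) \in \Norm{p}{\widehat{\Var V}}$. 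Along the way I would recall the classical fact that the second fundamental form of the \emph{cone} $\widehat{\Var V}$ annihilates the radial direction $E_0|_p$ (the lines through the origin lie on $\widehat{\Var V}$), which is exactly why $\SFF^*_p(x)$ descends to a quadratic form on the $(\dim\Var V)$-dimensional space $\Tang{[p]}{\Var V}$ and why ``invertible on $S^2(\Tang{[p]}{\Var V})$'' is the appropriate nondegeneracy hypothesis.

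The heart of the proof is then a one-line computation of $\ker \mathrm{d}_{[p]}\gamma$. A vector $v \in \Tang{[p]}{\Var V}$ lies in the kernel if and only if $\SFF_p(v, w) = 0$ in $\Norm{p}{\widehat{\Var V}}$ for every $w \in \Tang{[p]}{\Var V}$; pairing with an arbitrary normal vector $y$, this is equivalent to $\SFF^*_p(y)(v, w) = 0$ for all $w$ and all $y \in \Norm{p}{\widehat{\Var V}}$. Hence $\ker \mathrm{d}_{[p]}\gamma = \bigcap_{y} \operatorname{rad}\bigl(\SFF^*_p(y)\bigr)$, where $\operatorname{rad}(q) = \{v : q(v,\cdot) = 0\}$ is the radical of a quadratic form $q$ on $\Tang{[p]}{\Var V}$. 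The hypothesis supplies one normal direction $x$ for which $\SFF^*_p(x)$ is nondegenerate on $\Tang{[p]}{\Var V}$, so $\operatorname{rad}(\SFF^*_p(x)) = 0$ and therefore $\ker \mathrm{d}_{[p]}\gamma = 0$. Thus $\mathrm{d}_{[p]}\gamma$ is injective, $\gamma$ is nondegenerate, and $\Var V$ is not $1$-weakly defective.

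I do not expect a genuine obstacle here; the only thing requiring care is the bookkeeping between the $(\dim\Var V + 1)$-dimensional cone tangent space $\Tang{p}{\widehat{\Var V}}$ and the $(\dim\Var V)$-dimensional projective tangent space $\Tang{[p]}{\Var V}$ --- the radial direction always belongs to $\operatorname{rad}(\SFF^*_p(y))$, which is precisely why one works on the quotient --- together with pinning down the Griffiths--Harris formula $\mathrm{d}_{[p]}\gamma(v) = \SFF_p(v, \cdot)$ in the chosen frame. If one prefers to avoid invoking \cite{GH1979}, the same conclusion can be reached by elementary means: parameterize $\widehat{\Var V}$ locally and write its embedded tangent space as the graph of the differential of the parameterization; then $\gamma$ is literally this graph, $\mathrm{d}_{[p]}\gamma$ is read off from the Hessian of the parameterization, and invertibility of the single contraction $\SFF^*_p(x)$ makes the injectivity manifest. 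I would present the first version, as it is the cleaner one and dovetails with the second-fundamental-form computations already carried out in this section.
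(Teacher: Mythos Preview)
Your proposal is correct and follows essentially the same route as the paper's proof: both invoke Remark~3.1(ii) of \cite{CC2001} to reduce to nondegeneracy of the Gauss map and then use the Griffiths--Harris identification of $\mathrm{d}_{[p]}\gamma$ with $|\SFF_p|$ from \cite{GH1979}. The only difference is that the paper cites (2.6) of \cite{GH1979} as a black box for the implication ``a nonsingular quadric in $|\SFF_p|$ $\Rightarrow$ $\gamma$ nondegenerate,'' whereas you unpack this step explicitly via the kernel computation $\ker\mathrm{d}_{[p]}\gamma = \bigcap_y \operatorname{rad}(\SFF^*_p(y))$.
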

\begin{proof}
The conditions are equivalent to the existence of a nonsingular quadric in the linear system of quadrics formed by $|\SFF|$, so by (2.6) of \cite{GH1979} the Gauss map $\gamma$ is nondegenerate. The result follows from Remark 3.1(ii) of \cite{CC2001}.
\end{proof}

Based on this reformulation, which is essentially the Katz dimension formula  \cite{GKZ1994} for computing dimensions of the dual variety in the case the dimension is maximal, we can provide an alternative proof of part of Oeding's result.

\begin{proposition}[Part of Theorem 1.3 in \cite{Oeding2012}] \label{prop_n1wd_chow}
  The degree-$d$ Chow variety of zero-cycles $\Var{C}_{d,n}$ in $n+1$ variables is not $1$-weakly defective if $3 \le d \le n+1$.
\end{proposition}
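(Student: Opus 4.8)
The plan is to apply \cref{lem_equiv_not1wd_sff}: I need to exhibit a single smooth point $p = L_0 \cdots L_{d-1} \in \widehat{\Var{C}}_{d,n}$ and a single normal direction $\eta \in \Norm{p}{\widehat{\Var{C}}_{d,n}}$ such that the symmetric bilinear form $\SFF^*_p(\eta)$, restricted to the projective tangent directions $S^2(\Tang{[p]}{\Var{C}})$, is nonsingular. By \cref{eqn_sff}, up to the orthogonal projection $\mathrm{P}_{\mathrm{N}_p \widehat{\Var{C}}}$, the second fundamental form is given explicitly: $\SFF_p(E_{k,i},E_{l,j})$ equals (the normal part of) $q_{i,j,k,l} = x_i x_j \prod_{\alpha \ne k,l} L_\alpha$ when $k\ne l$ and vanishes when $k=l$. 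The contraction $\SFF^*_p(\eta)$ is therefore the Gram-type ``matrix'' whose $((k,i),(l,j))$ entry is $\langle q_{i,j,k,l}, \eta\rangle$ for $k\ne l$ and zero for $k=l$. So the task reduces to choosing $L_0,\dots,L_{d-1}$ and $\eta$ making this explicit block matrix invertible on the quotient by the Euler direction.

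The natural choice is $L_\alpha = x_\alpha$ for $\alpha = 0, \ldots, d-1$, which is legitimate precisely because $d \le n+1$ gives us enough variables to make the $L_\alpha$ distinct coordinate forms; this is where the hypothesis $d \le n+1$ enters. With this choice $q_{i,j,k,l} = x_i x_j \cdot m_{k,l}$ where $m_{k,l} = \prod_{0\le\alpha\ne k,l<d} x_\alpha$ is a squarefree monomial of degree $d-2$ supported on $\{0,\dots,d-1\}\setminus\{k,l\}$. I would then pick $\eta$ to be a generic (or cleverly chosen explicit) normal vector — for instance a linear combination of monomials $x_i x_j \cdot m_{k,l}$ that are orthogonal to the tangent space — designed so that $\langle q_{i,j,k,l},\eta\rangle$ is nonzero exactly on a matching that makes the block matrix a (signed) permutation-like matrix after reordering. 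Concretely, one can try $\eta$ supported on monomials of the form $x_a x_b \prod_{\alpha\ne a,b} x_\alpha$ with $a,b \ge d$ fresh variables appearing, so that pairing with $q_{i,j,k,l}$ selects precisely the terms where $\{i,j\}$ supplies the two ``extra'' slots; checking these monomials are genuinely normal (not tangent) uses that tangent vectors all contain at least $d-1$ of the $x_0,\dots,x_{d-1}$ as factors, whereas the chosen normal monomials contain only $d-2$ of them.

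After this reduction the core is a finite linear-algebra verification: show the resulting structured matrix (zero diagonal blocks, and off-diagonal blocks determined by the monomial pairings above) is invertible modulo the one-dimensional Euler subspace $\langle E_0|_p\rangle = \langle p \rangle$. I expect this to be the main obstacle — the honest part of the argument — because one must (i) verify the chosen $\eta$ really lies in $\Norm{p}{\widehat{\Var{C}}}$, (ii) control exactly which pairings $\langle q_{i,j,k,l},\eta\rangle$ are nonzero and with what sign/coefficient, and (iii) run a rank computation on a matrix of size roughly $d(n+1)-1$ that has a nontrivial combinatorial block structure, then quotient out the Euler line correctly. Semicontinuity of rank (the same argument invoked after \cref{lem_equiv_not1wd_sff} via \cite{GH1979,CC2001}) means it suffices to do this for one well-chosen point, so I would aim for the cleanest explicit $\eta$ that turns the Gram matrix into something whose determinant is visibly a nonzero monomial in the coordinates of $\eta$; genericity of $\eta$ then finishes it. The constraint $3\le d$ is needed so that $d-2\ge 1$, i.e., the monomials $m_{k,l}$ are nonconstant and the second fundamental form is not identically trivial on the relevant directions.
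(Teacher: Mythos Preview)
Your overall strategy coincides with the paper's: apply \cref{lem_equiv_not1wd_sff} at the point $p = x_0\cdots x_{d-1}$ (legitimate since $d\le n+1$), use \cref{eqn_sff} to write $\SFF_p(E_{k,i},E_{l,j})$ in terms of the monomials $q_{i,j,k,l}=x_i x_j\prod_{0\le\alpha\ne k,l<d}x_\alpha$, and exhibit a normal $\eta$ for which the Gram matrix is nonsingular on $\Tang{[p]}{\Var C}$. However, the concrete $\eta$ you propose does not work, and the gap is not just cosmetic.

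The projective tangent space splits as $A\oplus A'$, where $A$ is spanned by the $E_{k,i}|_p$ with $0\le i<d$, $i\ne k$, and $A'$ by those with $d\le i\le n$. Your suggested $\eta$, supported on monomials that carry two ``fresh'' factors $x_a,x_b$ with $a,b\ge d$, pairs nontrivially with $q_{i,j,k,l}$ only when $i,j\ge d$; hence $\SFF_p^*(\eta)$ is identically zero on $A$ and can never be invertible on the full $\Tang{[p]}{\Var C}$. In the boundary case $d=n+1$ there are no fresh variables at all, so your $\eta$ does not even exist, yet $A$ has dimension $d(d-1)>0$. Appealing to ``genericity of $\eta$'' does not rescue this: genericity only helps once you know \emph{some} $\eta$ works, and that is precisely what remains to be shown on the $A$ block.

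The paper's $\eta$ has two kinds of terms, one for each block: the cubes $x_i^3\prod_{0\le\alpha\ne k,l,i<d}x_\alpha$ (with $i<d$) handle $A$, and the squares $x_j^2\prod_{0\le\alpha\ne k,l<d}x_\alpha$ (with $j\ge d$) handle $A'$. With this choice $\SFF_p^*(\eta)$ becomes block-diagonal $\mathrm{diag}(G,H)$ with $G=\mathbf 1_{d-1}\otimes I_d - I_{d(d-1)}$ and $H=I_{n+1-d}\otimes\mathbf 1_d - I_{d(n+1-d)}$, whose eigenvalues are $\{d-2,-1\}$ and $\{d-1,-1\}$ respectively. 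This also pinpoints the true role of the hypothesis $d\ge 3$: it is needed so that the eigenvalue $d-2$ of $G$ is nonzero. Your stated reason (that $m_{k,l}$ be nonconstant) is not the operative one; when $d=2$ the second fundamental form is still nontrivial, but $G$ degenerates.
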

\begin{proof}
Since $3 \le d \le n+1$, we can consider the smooth point $[p] = [x_0 \cdots x_{d-1}] \in \Var{C}$. In this case, the tangent space $\Tang{p}{\widehat{\Var{C}}}$ is spanned by a basis of monomials, namely by 
\begin{align*}
[p] &= \langle x_0 \cdots x_{d-1} \rangle, \\
A &= \langle x_2\cdots x_{d-1}, \ldots, x_1 \cdots x_{d-2} \rangle x_0^2 \oplus \cdots \oplus \langle x_1 \cdots x_{d-2}, \ldots, x_0 \cdots x_{d-3} \rangle x_{d-1}^2, \\
A' &= \langle x_{d}, \ldots, x_n \rangle x_1 \cdots x_{d-1} \oplus \cdots \oplus 
\langle x_{d}, \ldots, x_n \rangle x_0 \cdots x_{d-2}.
\end{align*}
These basis vectors are orthogonal with respect to the Euclidean and Hermitian inner products for $\RR$ and $\CC$ respectively. Consequently, 
\(
 \dim_\bbk [p] = 1,\, \dim_\bbk A = d(d-1), \text{ and } \dim_\bbk A' = d(n + 1 -d).
\)
Observe that \(A = \mathrm{span}\bigl( ( E_{k,i}|_p )_{0 \le k \ne i < d} \bigr)\), \(A' = \mathrm{span}\bigl( ( E_{k,i}|_p )_{0 \le k < d \le i \le n} \bigr)
\), and
\[
 E_{0,0}|_p = E_{1,1}|_p = \cdots = E_{d-1,d-1}|_p = p.
\]
The integral manifold associated with these last vector fields is exactly the fiber of the projection $\bbk^{N+1} \to \PP_\bbk^N, x \mapsto [x]$ at $p$. It follows that $\Tang{[p]}{\Var{C}} \simeq A \oplus A'$ is spanned by the orthogonal frame $\Var{E}$ formed by all $E_{k,i}$, $k=0,\dots,d-1$ and $i=0,\dots,n$, except for $E_{i,i}$:
\(
 \Var{E} = \Bigl( E_{k,i} \Bigr)_{\substack{0 \le k < d, 0 \le i \ne k \le n}}.
\)
For determining the \textit{projective} second fundamental form it suffices to compute the usual second fundamental form of (an open neighborhood of) the cone $\widehat{\Var{C}}$ at $\SFF_p(E,F)$ for all vector fields $E, F \in \Var{E}$.

Inspecting \cref{eqn_sff}, we have
\(
q_{i,j,k,l} = x_i x_j \prod_{0\le\alpha \ne k,l < d} x_\alpha.
\)
We project the monomials $q_{i,j,k,l}$ onto $\Norm{p}{\widehat{\Var{C}}}$.
Because $q_{j,i,k,l}=q_{i,j,k,l}$ and $q_{i,j,l,k}=q_{i,j,k,l}$ for all valid $0 \le i, j \le n$ and $0 \le k \ne l < d$, it suffices to determine what happens to the projection if 
$0 \le i \le j \le n$ and $0 \le k < l < d$. Note that this is easy with the given monomial bases, as it just consists of verifying whether or not $q_{i,j,k,l}$ is one of these basis vectors. After some computations, we obtain
\begin{align}\label{eqn_sff_components}
\SFF_p(E_{k,i}, E_{l,j}) = \begin{cases} 
 x_i^2 x_j^2 \prod_{0 \le \alpha\ne k,l,i,j < d} x_\alpha &\text{if } 0 \le i < j < d \text{ and } \sharp\{k,l,i,j\}=4 \\ 
 x_i^3 \prod_{0\le \alpha\ne k,l,i<d} x_\alpha & \text{if } 0 \le i = j < d \text{ and } \sharp\{k,l,i\}=3 \\ 
 x_i^2 x_j \prod_{0\le\alpha\ne k,l,i<d} x_\alpha & \text{if } 0 \le i < d \le j \text{ and } \sharp\{k,l,i\}=3 \\ 
 x_i x_j \prod_{0\le\alpha\ne k,l<d} x_\alpha & \text{if } d \le i \le j \text{ and } k \ne l, \\
 0 & \text{ otherwise},
 \end{cases}
\end{align}
where $\sharp S$ denotes the cardinality of the set $S$.

Consider the polynomial
\[
 \eta = \sum_{0 \le i\ne k\ne l < d} x_i^3 \prod_{0\le\alpha\ne k,l,i<d} x_\alpha + \sum_{j=d}^n x_j^2 \sum_{0 \le k \ne l < d} \,\prod_{0\le\alpha\ne k,l<d} x_\alpha \in S^{d} \CC^{n+1}.
\]
Note that it is a linear combination of nonzero monomials appearing in the right hand side of \cref{eqn_sff_components}, so that $\eta \in \Norm{p}{\widehat{\Var{C}}}$. 
Contracting $\eta$ with $\SFF_p(E_{k,i}, E_{l,j})$ yields
\begin{align*}
 \langle \SFF_p(E_{k,i}, E_{l,j}), \eta\rangle = \begin{cases}
 1 & \text{if } 0 \le i = j < d \text{ and } \sharp\{k, l, i\} =3, \\
 1 & \text{if } d \le i = j \text{ and } k \ne l, \\
 0 & \text{otherwise}.
 \end{cases}
\end{align*}
Putting all of these together in a succinct matrix, with rows indexed by $(k,i)$ and columns by $(l,j)$, and the vector fields from the frame $\Var{E}$ in the order determined by the labeling of the rows and columns in the matrices below, we obtain
\[
|\SFF_p^*|(\eta) = 
 \begin{blockarray}{ccc}
 & 0 \le j < d & d \le j \le n \\
 \begin{block}{c[cc]}
 0 \le i < d & G & 0 \\
 d \le i \le n & 0 & H \\
 \end{block}
 \end{blockarray},
\]
where 
\[
 H = 
 \begin{blockarray}{ccccc}
 & l=0 & l=1 & \cdots & l=d-1 \\
 \begin{block}{c[cccc]}
 k=0 & 0 & I_{n+1-d} & \cdots & I_{n+1-d} \\
 k=1 & I_{n+1-d} & 0 & \ddots & \vdots \\
 \vdots & \vdots & \ddots & \ddots & I_{n+1-d} \\
 k=d-1 & I_{n+1-d} & \cdots & I_{n+1-d} & 0 \\
  \end{block}
 \end{blockarray} 
 = I_{n+1-d} \otimes \mathbf{1}_d - I_{d(n+1-d)}
\]
with $\mathbf{1}_d$ the $d \times d$ matrix filled with ones and $I_d$ the $d \times d$ identity matrix, and 
\begin{align*}
 G &= 
 \begin{blockarray}{ccccc}
 & j = 0 & j=1 & \cdots & j=d-1 \\
 \begin{block}{c[cccc]}
  i=0 & \mathbf{1}_{d-1} - I_{d-1} & 0 & \cdots & 0 \\
  i=1 & 0 & \mathbf{1}_{d-1} - I_{d-1} & \ddots & \vdots\\
  \vdots & \vdots & \ddots & \ddots & 0\\
  i=d-1& 0 & \cdots & 0 & \mathbf{1}_{d-1} - I_{d-1}\\
  \end{block}
 \end{blockarray} \\
 &= \mathbf{1}_{d-1} \otimes I_{d} - I_{d(d-1)}.
\end{align*}

Note that the submatrix of $|\SFF_p^*|(\eta)$ corresponding to the vector fields generating $A$ is $G$, and likewise for $A'$ and the submatrix $H$.
Hence, the diagonal blocks of $G$ are square matrices of order $d-1$. The reason is that for $0 \le i < d$ the vector field $E_{i,i} \not\in \Var{E}$ because it is in the fiber of the projection map; for the $i$th matrix on the block diagonal, the $d-1$ valid indices are $0 \le k \ne i < d$ and $0 \le l \ne j < d$.

Recall that the eigenvalues of the tensor product $A \otimes B$ of symmetric matrices $A$ and $B$ are equal to all products of the eigenvalues of $A$ and $B$; see, e.g., \cite{dSM2014}. Since $\mathbf{1}_d$ has eigenvalues $d$ and $0$ (with multiplicity $d-1$), it follows that $\lambda(H) = \{ d-1, -1 \}$, so $H$ is an $d(n+1-d) \times d(n+1-d)$ invertible matrix. Similarly, $\lambda(G) = \{ d-2, -1 \}$, so that $G$ is an $d(d-1)\times d(d-1)$ invertible matrix.
As a result, we conclude from \cref{lem_equiv_not1wd_sff} that the Chow variety $\Var{C}_{d,n}$ is not $1$-weakly defective when $3 \le d \le n+1$. In particular, it not $1$-twd. This concludes the proof.
\end{proof}

\end{document}